\DeclareMathAlphabet{\mathpzc}{OT1}{pzc}{m}{it}
\newtheorem{theorem}{Theorem}[section]
\newtheorem*{theorem*}{Theorem}
\newtheorem{maintheorem}{Main Theorem}[section]
\newtheorem{conjecture}{Conjecture}[section]
\newtheorem{lemma}[theorem]{Lemma}
\newtheorem{proposition}[theorem]{Proposition}
\newtheorem{corollary}[theorem]{Corollary}
\newtheorem{fact}[theorem]{Fact}
\newtheorem{claim}[theorem]{Claim}
\theoremstyle{definition}
\newtheorem{definition}[theorem]{Definition}
\theoremstyle{remark}
\newtheorem{remark}{Remark}
\newtheorem{question}{Question}
\def\hook{\upharpoonright}
\def\forces{\Vdash}
\def\ZFC{\mathsf{ZFC}}
\def\PFA{\mathsf{PFA}}
\def\MA{\mathsf{MA}}
\def\BA{\mathsf{BA}}
\def\baire{\omega^\omega}
\def\cantor{2^\omega}
\def\mfb{\mathfrak b}
\def\CH {\mathsf{CH}}
\def\Q{\mathbb Q}
\def\P{\mathbb P}
\def\mfp{\mathfrak{p}}
\title{A note on adding isomorphisms and the pseudointersection number}
\author[Switzer]{Corey Bacal Switzer}
\address[C.~B.~Switzer]{Institut f\"{u}r Mathematik, Kurt G\"odel Research Center, Universit\"{a}t Wien, Kolingasse 14-16, 1090 Wien, AUSTRIA}
\email{corey.bacal.switzer@univie.ac.at}
\thanks{\emph{Acknowledgments:} This research was funded in whole or in part by the Austrian Science Fund (FWF) through the following grant: 10.55776/ESP548.}
\begin{document}

\begin{abstract}
   We prove that for every tower $\mathcal T$ there are $\aleph_1$-dense $A$ and $B$ so that any ``reasonable" forcing notion $\P$ - an adjective that includes all known ones - for making $A$ and $B$ isomorphic will add a pseudointersection for the tower. This shows in particular that $\MA_{\aleph_1}(\sigma{\rm -centered})$ holds in all known models of $\BA$, which provides intrigue to well known questions of Todor\v{c}evi\'c and Stepr\=ans-Watson. 
\end{abstract}

\maketitle

\section{Introduction}

Recall that a set of reals $A\subseteq \mathbb R$ is $\aleph_1${\em -dense} if its intersection with each nonempty open interval has size $\aleph_1$. Alternatively an arbitrary linear order $(L, <)$ is $\aleph_1$-dense just in case each nonempty interval has size $\aleph_1$. Clearly for separable linear orders (without endpoints) these definitions are equivalent. {\em Baumgartner's axiom}, hereafter denoted $\BA$, is the statement that all $\aleph_1$-dense sets of reals are isomorphic. $\BA$ is well studied in set theory, see e.g. \cite{Baum73, weakBA, BaumPFA, ARS85, AvrahamShelah81}. It was proved consistent in \cite{Baum73}. It is well known that $\BA$ can be forced over a model of $\CH$ (\cite{Baum73}), and follows from $\PFA$ (\cite{BaumPFA}) but not $\MA + \neg \CH$ (\cite{AvrahamShelah81}). The most important motivation for the current paper is the following theorem of Todor\v{c}evi\'{c} from \cite{Todorcevic89}. 

\begin{theorem}(\cite{Todorcevic89}) \label{todorcevic}
$\BA$ implies $\mfb > \aleph_1$. 
\end{theorem}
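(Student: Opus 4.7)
The plan is to prove the contrapositive: assuming $\mfb = \aleph_1$, I will construct two $\aleph_1$-dense sets of reals that are not order-isomorphic, showing $\BA$ fails. Fix a $\leq^*$-strictly increasing, $\leq^*$-unbounded sequence $\langle f_\alpha : \alpha < \omega_1 \rangle$ in $\omega^\omega$, which exists since $\mfb = \aleph_1$.

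Build $\aleph_1$-dense sets $A, B \subseteq \mathbb R$ as increasing unions $A = \bigcup_{\alpha < \omega_1} A_\alpha$ and $B = \bigcup_{\alpha < \omega_1} B_\alpha$ of countable dense sets, by recursion on $\alpha$. At stage $\alpha$, fix a fresh target point $x_\alpha \in \mathbb R$ and insert around it a countable ``gadget'' into $A$ whose order type with respect to a fixed reference countable dense subset encodes the growth rate of $f_\alpha$ --- for example by counting how many reference points lie strictly between successive coded points on either side of $x_\alpha$. In $B$ do the same, but with $f_\alpha$ replaced by a fixed recursive reference function $h$. Careful bookkeeping keeps each $A_\alpha$ and $B_\alpha$ dense in $\mathbb R$ and ensures the coded configuration is recoverable from the order type alone, hence invariant under any order-isomorphism.

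Now suppose toward contradiction that $\phi : A \to B$ is an order-isomorphism. A standard elementary submodel / reflection argument applied to a large enough $H(\theta)$ containing $\phi$, $\langle A_\alpha \rangle$, $\langle B_\alpha \rangle$, and $\langle f_\alpha \rangle$ produces a club $D \subseteq \omega_1$ on which $\phi[A_\alpha] = B_\alpha$ and $\phi$ preserves the coded data around each target. For $\alpha \in D$, reading off the image under $\phi$ of the gadget at $x_\alpha$ (its position is dictated by $B_\alpha$ together with $h$) yields a function $g_\alpha \in \omega^\omega$ definable from $\phi \restriction A_\alpha$ such that $g_\alpha \geq^* f_\alpha$. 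A pressing-down or diagonal argument along $D$ then combines these into a single $g \in \omega^\omega$ dominating the whole family $\langle f_\alpha : \alpha < \omega_1 \rangle$, contradicting its unboundedness.

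The main obstacle is the design of the stage-$\alpha$ gadgets: the function $f_\alpha$ must be coded by purely order-theoretic features of a countable neighborhood of $x_\alpha$ (so the coding is isomorphism-invariant), the gadgets at different stages must not interfere, and the recursion must be arranged so that a generic elementary submodel of sufficient size reflects both the coding and $\phi$. This order-theoretic encoding of a scale is the conceptual heart of Todor\v{c}evi\'c's argument; secondary technical hurdles are keeping the $A_\alpha, B_\alpha$ genuinely dense in $\mathbb R$ throughout and verifying that the club $D$ is obtained uniformly enough that the extracted $g_\alpha$'s can be amalgamated into a single dominating $g$.
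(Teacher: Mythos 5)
The paper does not prove Theorem~\ref{todorcevic}; it is quoted as a black box from \cite{Todorcevic89}, so there is no internal proof to compare yours against.

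As for your proposal: the overall strategy --- start from a $\leq^*$-increasing unbounded $\omega_1$-chain and argue that any isomorphism between suitably built $\aleph_1$-dense sets would yield a function dominating it --- is the right general shape for Todor\v{c}evi\'c's argument. But the central step you describe cannot work as stated. You propose to code $f_\alpha$ by ``purely order-theoretic features of a countable neighborhood of $x_\alpha$,'' e.g.\ by ``counting how many reference points lie strictly between successive coded points.'' In a dense linear order this is vacuous: between any two distinct points of an $\aleph_1$-dense set there are already $\aleph_1$ many points, and by Cantor's theorem every countable dense suborder without endpoints has the same order type. There is no order-invariant count, and there is no countable order-theoretic ``gadget'' that distinguishes one $f_\alpha$ from another. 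This is not a technical hurdle to smooth over later; it is the reason the theorem is hard. You flag it yourself as ``the main obstacle,'' but no mechanism is offered, and the naive one suggested is impossible.

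Two further soft spots: (i) even granting $\phi[A_\alpha] = B_\alpha$ on a club, it is not explained why the image of the stage-$\alpha$ gadget forces the existence of a single function $g_\alpha \geq^* f_\alpha$ that is \emph{uniformly} recoverable; and (ii) the closing ``pressing-down or diagonal argument'' amalgamating the $g_\alpha$'s into one dominating $g$ is asserted rather than carried out, and for an arbitrary club-indexed family of bounds there is no such amalgamation in general. The actual argument of \cite{Todorcevic89} does not read $f_\alpha$ off a countable local order type; instead it uses the $\mathfrak{b}$-scale to build an $\aleph_1$-dense set with a global rigidity property (e.g.\ one not order-isomorphic to its own reverse), where the extraction of a dominating function uses uncountable combinatorics of the scale rather than local counting. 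As written, your proof has a genuine gap at its core.
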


The precise definition of $\mfb$ and other cardinals used in this introduction will be given below in Subsection 1.1 as part of preliminaries.

In this short note we provide some commentary on Todor\v{c}evi\'c's theorem by proving that any ``reasonable" way of forcing $\BA$ will in fact force many more well studied cardinal characteristics to be large.
\begin{maintheorem} \label{introtheorem}
    If $\P$ is an iteration of {\em reasonable} forcing notions which forces $\BA$ then in any generic extension by $\P$ it is necessary that $\mfp > \aleph_1$. %and ${\rm add}(\Null)$ are greater than $\aleph_1$. 
\end{maintheorem}

The precise definition of ``reasonable" is given in Section 2. We note now that it encompasses the forcing notions used in \cite{Baum73}, \cite{BaumPFA}, \cite{ARS85} and \cite[Chapter 15]{todorcevicnotesonforcingaxioms} for studying $\BA$, which to the best of our knowledge are more or less all of them that appear in the literature. 

This theorem provides not quite a partial answer but some intrigue to questions from \cite{Todorcevic89} and \cite{Stepranswatson87}. To explain this better we recall the following, easy to check proposition, see \cite{Stepranswatson87} for a proof.

\begin{proposition}
    $\BA$ is equivalent to the statement that for all $\aleph_1$-dense $A, B \subseteq \mathbb R$ there is an autohomeomorphism $h:\mathbb R \to \mathbb R$ mapping $A$ to $B$. \label{propforBA}
    \end{proposition}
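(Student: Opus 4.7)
The proposition is a two-direction equivalence, and I would handle each direction separately, with the meat of the argument concentrated in the forward direction.

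For the forward direction (that $\BA$ implies the homeomorphism statement), my plan is as follows. Take $\aleph_1$-dense $A, B \subseteq \mathbb{R}$. By $\BA$, fix an order isomorphism $f \colon A \to B$. The goal is to extend $f$ to an autohomeomorphism $\bar f \colon \mathbb{R} \to \mathbb{R}$ by filling in the points of $\mathbb{R} \setminus A$ using density. Concretely, for $x \in \mathbb{R}$ define
\[
\bar f(x) = \sup\{ f(a) : a \in A,\ a \leq x\},
\]
where for $x \in A$ this sup equals $f(x)$, and for $x \notin A$ this is the sup over all $a < x$ in $A$. I would then verify in turn: (i) $\bar f$ takes values in $\mathbb{R}$, because $B$ is bounded above on any bounded interval (use any $a' \in A$ with $a' > x$ and note $\bar f(x) \leq f(a')$); (ii) $\bar f$ is strictly increasing since $f$ is; (iii) $\bar f$ is continuous, the only thing to check for a monotone function being the absence of jumps, and any jump at $x$ would produce a nonempty open interval in $\mathbb{R}$ disjoint from $B$, contradicting density of $B$; (iv) $\bar f$ is surjective, because $B$ is cofinal and coinitial in $\mathbb{R}$ as a consequence of $\aleph_1$-density. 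An increasing continuous bijection of $\mathbb{R}$ is automatically a homeomorphism, so $\bar f$ is the desired autohomeomorphism with $\bar f[A] = B$.

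For the backward direction, given the homeomorphism hypothesis, take $\aleph_1$-dense $A, B$ and fix $h \colon \mathbb{R} \to \mathbb{R}$ an autohomeomorphism with $h[A] = B$. Since $h$ is a continuous bijection of $\mathbb{R}$, it is strictly monotonic. If $h$ is increasing then $h \upharpoonright A \colon A \to B$ is already an order isomorphism, proving $\BA$ for this pair. If $h$ is decreasing, apply the hypothesis once more to the pair $A$ and $-B$ (which is also $\aleph_1$-dense): should the resulting autohomeomorphism $h'$ be decreasing, the map $x \mapsto -h'(x)$ is an increasing autohomeomorphism of $\mathbb{R}$ carrying $A$ to $B$, and should $h'$ instead be increasing one handles the remaining orientation cases by the same trick applied to the pairs $(B,-B)$ and $(A,-A)$. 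This part is pure bookkeeping on whether each witnessing homeomorphism is orientation-preserving or orientation-reversing, and contains no real content beyond the observation that autohomeomorphisms of $\mathbb{R}$ are monotone.

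The only genuinely nontrivial step is the continuity verification in the forward direction; once one spots that a jump of $\bar f$ would translate into an open interval in $\mathbb{R}$ missing $B$, the whole proof collapses into a short exercise. Note that none of the argument actually uses the full strength of $\aleph_1$-density: ordinary density of $A, B$ in $\mathbb{R}$ (together with being cofinal and coinitial) is what makes the extension go through, which is why the equivalence holds purely for topological reasons and shifts no load onto $\BA$ itself.
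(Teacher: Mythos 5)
The paper does not give its own proof of this proposition; it refers the reader to Stepr\=ans--Watson. So your proof stands on its own merits.

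Your forward direction is correct and is the standard Dedekind-style extension: $\bar f(x)=\sup\{f(a):a\in A,\,a\le x\}$ is strictly increasing, takes real values, is surjective, and is continuous precisely because a jump would produce a nonempty open interval disjoint from $B$, contradicting density. A strictly increasing continuous bijection of $\mathbb R$ is an autohomeomorphism, and $\bar f\hook A=f$ together with bijectivity gives $\bar f[A]=B$. Nothing to add there.

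The backward direction has a genuine gap in the bookkeeping. If $h\colon\mathbb R\to\mathbb R$ is the witnessing autohomeomorphism for $(A,B)$ and $h$ is decreasing, you propose applying the hypothesis to $(A,-B)$, $(B,-B)$ and $(A,-A)$. But each of these applications is only useful with \emph{one} of the two possible orientations: a decreasing witness for $(A,-B)$ yields an increasing map $A\to B$, but an increasing one merely restates $A\cong B^*$; an increasing witness for $(B,-B)$ gives $B\cong B^*$ and finishes, but a decreasing one is vacuous (it composes with $-\mathrm{id}$ to an increasing self-map of $B$); similarly for $(A,-A)$. Since the hypothesis supplies only \emph{some} autohomeomorphism for each pair, you can land in the scenario where all three supplementary witnesses fall on the unhelpful side, and your casework terminates without an order isomorphism. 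The correct fix is to route through a \emph{symmetric} $\aleph_1$-dense set: pick $A_1\subseteq(0,\infty)$ that is $\aleph_1$-dense in $(0,\infty)$ and let $D=A_1\cup(-A_1)$, so $-D=D$. For any $\aleph_1$-dense $E$, the hypothesis gives a monotone autohomeomorphism $g$ with $g[E]=D$; whichever orientation $g$ has, one of $g$ or $-g$ is increasing and still maps $E$ onto $D$ (since $-D=D$), so $E\cong D$ as linear orders. Applying this to $E=A$ and $E=B$ gives $A\cong D\cong B$, and $\BA$ follows. This also shows the backward direction is not entirely content-free: the orientation issue is real and needs the symmetric intermediary, not just repeated reflection of $A$ and $B$.

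Two minor remarks on your closing comment: you are right that $\aleph_1$-density (as opposed to plain density and unboundedness) plays no role in the extension of the order isomorphism to a homeomorphism; but it does matter that you are quantifying over \emph{all} $\aleph_1$-dense pairs when invoking the hypothesis in the backward direction, since you need to apply it to the auxiliary set $D$, not just to the given $A$ and $B$.
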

 This latter formulation is useful for obtaining analogues of $\BA$ on other topological spaces. The following definition along these lines is due to Stepr\=ans-Watson, see \cite{Stepranswatson87}.
 \begin{definition}
     Let $X$ be a topological space and $\kappa$ a cardinal. We say that $\BA_\kappa(X)$ holds just in case for each $A, B \subseteq X$ which are $\kappa$-dense there is an autohomeomorphism $h:X \to X$ mapping $A$ to $B$. 
 \end{definition}
Here $\kappa$-dense means $A \cap U$ has size $\kappa$ for each nonempty, open $U \subseteq X$. Thus the content of Proposition \ref{propforBA} is that $\BA$ is equivalent to $\BA_{\aleph_1}(\mathbb R)$. The case of $\mathbb R$ turns out to be unique along these lines. 
\begin{fact} \label{BAfacts}
    \begin{enumerate}
        \item For all $\kappa < \mfp$ we have $\BA_\kappa (\cantor)$ and $\BA_\kappa (\baire)$ (\cite{BaldwinBeaudoin89}).
        \item For all $\kappa < \mfp$ and all natural numbers $n > 1$ we have $\BA_\kappa (M^n)$ and $\BA_\kappa (\mathbb R^n)$ hold for any compact $n$-dimensional manifold $M^n$ (\cite{Stepranswatson87}).
        \item $\BA := 
        \BA_{\aleph_1} (\mathbb R)$ does not follow from $\MA + \neg \CH$. In particular $\aleph_1 < \mfp$ does not suffice to conclude $\BA$ (\cite{AvrahamShelah81}.
    \end{enumerate}
\end{fact}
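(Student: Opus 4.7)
The three clauses have rather different flavors and I would handle them separately.

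For clause (1) the plan is a back-and-forth construction, or equivalently a single application of a $\sigma$-centered forcing. Identifying an autohomeomorphism of $\cantor$ with a suitably coherent system of level-preserving bijections on the standard clopen basis, one sets up a poset whose conditions are finite partial such coherences together with finitely many promises of the form ``the point $a_\alpha$ will eventually be mapped to $b_\beta$''. The poset is $\sigma$-centered because any two conditions sharing the same finite partial coherence are compatible, so Bell's theorem ($\mfp = \MA(\sigma\text{-centered})$) yields a filter meeting the $\kappa < \mfp$ many density requirements that demand each $a_\alpha$ lie in the domain and each $b_\beta$ lie in the range. The generic then codes the desired autohomeomorphism, and the same skeleton handles $\baire$ with trees over $\omega$ in place of $2$. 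The essential thing to verify is that promises can always be extended, which reduces to the $\kappa$-density of $A$ and $B$: any basic clopen target set still contains $\kappa$ many, hence at least one available, point of the opposite side.

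For clause (2) the same back-and-forth strategy applies, but the extension step becomes geometric rather than purely combinatorial. The key fact is that in a manifold $M^n$ with $n > 1$ (or in $\mathbb{R}^n$ with $n > 1$), given any two finite tuples of points with the same ``pattern'' relative to a fine enough open cover by disks, there is an ambient homeomorphism supported on a compact set that carries one tuple to the other. Using this extension lemma one iteratively enlarges partial autohomeomorphisms to cover new $a_\alpha, b_\alpha$ and then assembles the final map as the generic for a $\sigma$-centered poset of size $\kappa < \mfp$. The main obstacle here, and the reason I would expect the write-up to be longer than in the zero-dimensional case, is ensuring continuity of the limit: one has to bake explicit modulus-of-continuity promises into the forcing so that the partial maps converge uniformly rather than just pointwise.

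For clause (3) the obstruction to $\BA$ must be produced by a forcing construction that preserves $\MA + \neg\CH$. I would begin in a ground model of $\CH$ and carefully construct two $\aleph_1$-dense sets $A, B \subseteq \mathbb R$ whose would-be autohomeomorphism is rigidified by a coding device that a stationary set of countable elementary submodels can detect; then iterate with oracle-$\mathrm{cc}$ forcing to force $\MA_{\aleph_1}$ while preserving the coding. The hardest step by far is designing the coding so that (i) it survives an oracle-$\mathrm{cc}$ iteration, and (ii) in the final model no autohomeomorphism of $\mathbb R$ can realize an isomorphism $A \to B$. This delicate oracle-based bookkeeping is exactly the technical heart of \cite{AvrahamShelah81}, and I would expect it to dominate the difficulty of the whole fact.
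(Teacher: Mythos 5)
The paper does not prove this Fact; it is stated without proof and attributed directly to \cite{BaldwinBeaudoin89}, \cite{Stepranswatson87}, and \cite{AvrahamShelah81}. So there is no internal argument to compare your sketches against, only the cited literature.

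That said, your sketches are broadly aligned with the cited approaches. For clause (1) the framework you describe --- a poset of finite approximations to a coherent system of level bijections plus finitely many point promises, handled via Bell's theorem (which gives $\MA_\kappa(\sigma\text{-centered})$ for all $\kappa < \mfp$) --- is exactly the Baldwin--Beaudoin/Medini strategy, and the paper itself works with the closely related forcing $\mathbb{M}_{A,B}$ in Section 3. However, your one-line justification of $\sigma$-centeredness (``any two conditions sharing the same finite partial coherence are compatible'') has a gap: two conditions can share the same level-$n$ permutation $\pi$ while promising two distinct points $a \neq a'$ with $a \hook n = a' \hook n$ to the \emph{same} target $b$, and such conditions are incompatible, since no injective extension can honor both promises. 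The actual centered decomposition has to fix more than the coherence alone; see the proof of Theorem 2.1 in \cite{Medini15}. For clause (2) the identification of the geometric extension lemma in dimension at least two and the concern about controlling moduli of continuity in the limit are both well placed; this is exactly what makes the higher-dimensional Stepr\=ans--Watson argument longer than the zero-dimensional one. For clause (3) your description of the Abraham--Shelah construction is a reasonable high-level account of \cite{AvrahamShelah81}, though it is by far the hardest of the three items and the sketch is necessarily far from a proof.
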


In particular, $\BA_{\aleph_1} (\mathbb R^n)$ does not imply $\BA$ for any $n > 1$. The {\em Stepr\=ans-Watson conjecture}, formulated in \cite{Stepranswatson87}, asks whether the converse holds.
\begin{conjecture} \label{conjecture}
  $\BA$ implies $\BA_{\aleph_1}(\mathbb R^n)$ for all $n > 1$. 
\end{conjecture}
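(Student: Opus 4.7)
The plan is to attack Conjecture 1.7 by attempting to strengthen Theorem 1.2: I would try to prove $\BA \Rightarrow \mfp > \aleph_1$ in $\ZFC$, from which Fact 1.6(2) would immediately deliver $\BA_{\aleph_1}(\mathbb R^n)$ for every $n > 1$. The Main Theorem of the present paper is the principal evidence that this strengthening \emph{should} be true: it shows that every known forcing route to $\BA$ in fact makes $\mfp$ larger than $\aleph_1$, so the conjecture can only fail in some model constructed by a forcing or inner-model technique not yet used for $\BA$.

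Concretely, I would first attempt to internalize the Main Theorem. Given a tower $\mathcal T$ of length $\aleph_1$ with no pseudointersection, the Main Theorem produces $\aleph_1$-dense $A, B \subseteq \mathbb R$ such that every reasonable forcing making them isomorphic adds a pseudointersection of $\mathcal T$. I would try to upgrade this to a purely combinatorial statement: that the mere \emph{existence} in $V$ of an autohomeomorphism of $\mathbb R$ sending $A$ to $B$ already yields, by an explicit definable recipe, a pseudointersection of $\mathcal T$. The idea would be to refine the construction of $A$ and $B$ so that arbitrarily large segments of any isomorphism are forced to agree with a particular monotonic witness whose graph, suitably interpreted, codes a cofinal descent inside $\mathcal T$.

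A second, parallel attack would be directly topological: given $\aleph_1$-dense $A, B \subseteq \mathbb R^n$ and assuming $\BA$, try to build an autohomeomorphism of $\mathbb R^n$ mapping $A$ to $B$ by a back-and-forth over a nested system of dyadic cubes. At each countable stage one would apply $\BA$ to ``boundary traces'' (which are essentially $\aleph_1$-dense subsets of $(n-1)$-dimensional objects) and extend a finite partial homeomorphism to the next cube. This approach is close in spirit to the MA-style proof of Fact 1.6(2), but stripped of the appeal to $\mfp > \aleph_1$ by replacing generic diagonalizations with applications of $\BA$ on the one-dimensional strata of the cube boundaries.

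The step I expect to be the decisive obstacle is the extension lemma at the heart of either approach. In the combinatorial attack one needs to show that no autohomeomorphism of $\mathbb R$ mapping $A$ to $B$ can avoid encoding, on some $\aleph_1$-sized subset, a descending chain through $\mathcal T$; in the topological attack one needs to extend a finite partial homeomorphism of $\mathbb R^n$ through one more prescribed correspondence while preserving all previously-fixed cube-boundary constraints. In dimension one the linear order makes both problems trivial; in dimension $>1$ the extra topological freedom is exactly what allows counterexamples of the form ``$\aleph_1 < \mfp$ fails but $\BA$ holds'' to potentially sneak in. I suspect that a solution will have to interleave the two viewpoints: use the forcing-theoretic diagnosis of the Main Theorem to pinpoint the exact cardinal invariant controlling $\BA_{\aleph_1}(\mathbb R^n)$ relative to $\BA$, and then either eliminate it by a clever coding or realize it as a non-cardinal topological obstruction that $\BA$ can directly dispatch.
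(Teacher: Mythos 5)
This statement is an open conjecture (the Stepr\=ans--Watson conjecture), not a theorem: the paper does not prove it and neither do you. Your proposal is therefore a research sketch rather than a proof, and it should be evaluated as such. With that caveat, your reading of the logical landscape is accurate and matches what the paper itself says. You correctly identify the main reduction: by Fact \ref{BAfacts}(2), proving $\BA \Rightarrow \mfp > \aleph_1$ (Question \ref{pquestion}) would settle the conjecture affirmatively, and the Main Theorem is exactly the paper's evidence that this implication is plausible, since every reasonable forcing route to $\BA$ already pushes $\mfp$ above $\aleph_1$. You also correctly locate the obstruction: the Main Theorem is a statement about forcing extensions, not a $\ZFC$ implication, and the paper's Section 4 (Lemma \ref{key} and the discussion following it) flags precisely the technical gap you describe --- one would need to run the argument of Lemma \ref{mainlemma} with countable closed nowhere dense sets in place of finite conditions, and it is unclear how to guarantee infinitely many $n\in X$ that miss a closed nowhere dense set $F$, whereas missing a finite set is automatic. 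Your ``first attack'' (internalizing the Main Theorem into a $\ZFC$ statement about autohomeomorphisms rather than generics) is essentially the paper's own proposed route in Section 4. Your ``second attack'' (a dyadic-cube back-and-forth in $\mathbb R^n$ using $\BA$ on boundary strata) is a different and not unreasonable idea, but you do not address the known obstacle that $\BA$ is a statement about \emph{separable linear orders}, while the boundary traces in $\mathbb R^n$ for $n\geq 2$ are not linearly ordered in a way that $\BA$ can directly act on; some replacement for the order structure is needed, and that is exactly the content of the conjecture. Neither attack closes the gap, which is to be expected since the problem is open, but be clear in your own writeup that you are offering a program, not a proof.
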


By Fact \ref{BAfacts}, item (2), to prove this conjecture positively it would be enough to improve Todor\v{c}evi\'c's Theorem \ref{todorcevic} to concern the pseudointersection number. This was in fact already asked by Todor\v{c}evi\'c, in \cite{Todorcevic89} and more recently discussed in the introduction to \cite{TodorcevicGuzman24}\footnote{In \cite{Todorcevic89} in fact Todor\v{c}evi\'c asks about $\mathfrak{t}$ however by the main result of \cite{p=t}, $\mfp = \mathfrak{t}$, though this was not known at the time.}.
\begin{question} \label{pquestion}
    Does $\BA$ imply $\mfp > \aleph_1$?
\end{question}

Thus Main Theorem \ref{introtheorem} states that were the answer to either Conjecture \ref{conjecture} or Question \ref{pquestion} to be negative a completely new idea for constructing models of $\BA$ would be needed. 

The rest of this paper is organized as follows. In the remainder of this section we review some preliminaries we will need. In Section 2 we introduce ``reasonable" forcing notions and prove forcing $\BA$ with reasonable forcing notions will force $\mfp > \aleph_1$.  In Section 3 we sketch a similar result for forcing $\BA$ for other spaces and in Section 4 we conclude with an observation that $\BA$ implies a formal strengthening that suggests further methods for attacking Question \ref{pquestion}. 

\subsection{Preliminaries}
The rest of this section is devoted to recalling a few facts and definitions we will need. Our notation is mostly standard, conforming to the monographs \cite{JechST} and \cite{KenST}. We also refer the reader to \cite{BlassHB} or \cite{BarJu95} for any undefined notions about cardinal characteristics. To begin, we recall the definitions of the cardinal characteristics we will need.

\begin{definition}
    \begin{enumerate}
        %\item The {\em additivity of the null ideal}, denoted $\add(\Null)$ is the least $\kappa$ for which there are $\kappa$ many Lebesgue measure zero sets of reals whose union is not Lebesgue measure zero\footnote{As is well known, the definition makes no difference if we take $\mathbb R$, $\baire$, $\cantor$ or any other perfect Polish space.}.

        \item Given $f, g \in \baire$ we write $f\leq^*g$ if and only if for all but finitely many $k < \omega$ we have $f(k) \leq g(k)$. In this case we say $g$ {\em eventually dominates} $f$. The {\em bounding number}, denoted $\mfb$ is the least size of an unbounded set $\mathcal B \subseteq \baire$, i.e. a set so that no single $g\in \baire$ eventually dominates every $f \in \mathcal B$.         

        \item If $\mathcal F$ is a family of infinite subsets of natural numbers we say that $\mathcal F$ has the {\em strong finite intersection property} if for every finite subset $\mathcal A \subseteq \mathcal F$ we have that $\bigcap \mathcal A$ is infinite. If $A \subseteq \omega$ is infinite then we say that $A$ is a {\em pseudointersection} of such a family $\mathcal F$ if $A \subseteq^* B$ for all $B \in \mathcal F$ - i.e. for each $B$ there is a $k < \omega$ with $A \setminus k \subseteq B$. In this case we also say $A$ is {\em almost contained in} $B$. The {\em pseudointersection number} $\mfp$ is the least size of a family with the strong finite intersection property but no pseudointersection. 

        \item A {\em tower} $\mathcal T \subseteq [\omega]^\omega$ is a set that is reverse well ordered by $\subseteq^*$. The {\em tower number}, denoted $\mathfrak{t}$ is the least size of a tower with no pseudointersection.
    \end{enumerate}
\end{definition}

Recall that $\mfp \leq \mfb$ provable in $\ZFC$, see e.g. \cite{BlassHB}. We also need the following celebrated result of Shelah and Malliaris from \cite{p=t}. %the reader's convenience we recall the following well known facts, see e.g. \cite{BlassHB}.

\begin{fact}
    It is a $\ZFC$ fact that $\mfp = \mathfrak{t}$.
\end{fact}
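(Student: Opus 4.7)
The direction $\mathfrak{t} \leq \mfp$ is immediate from the definitions, since a tower with no pseudointersection is in particular a family with the strong finite intersection property and no pseudointersection. The plan is therefore to prove the hard direction $\mfp \leq \mathfrak{t}$, i.e., that every family $\mathcal F \subseteq [\omega]^\omega$ of size strictly less than $\mathfrak{t}$ with the strong finite intersection property admits a pseudointersection. The first thing to try is a direct recursion: fix a hypothetical counterexample $\mathcal F = \{A_\alpha : \alpha < \kappa\}$ of minimal cardinality $\kappa = \mfp$ and, by induction on $\alpha < \kappa$, attempt to construct a $\subseteq^*$-decreasing sequence $\langle B_\alpha : \alpha < \kappa\rangle$ of infinite sets with $B_\alpha \subseteq^* A_\beta$ for all $\beta \leq \alpha$. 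If the recursion runs through all $\kappa$ stages, the resulting $\subseteq^*$-chain is a tower of size $\kappa$ with no pseudointersection, forcing $\mathfrak{t} \leq \kappa = \mfp$ and hence equality.

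The catch, which is precisely the content of the Malliaris--Shelah theorem, is that the limit stages of this recursion are not obviously navigable: at some limit $\alpha$ the sequence $\langle B_\beta : \beta < \alpha\rangle$ may admit pseudointersections but none compatible with the next $A_\alpha$ in the required way. To overcome this, one follows \cite{p=t} and reformulates the problem model-theoretically. Working with regular ultrafilters $\mathcal D$ on $\omega$ and their ultrapowers $(\omega,<)^\omega/\mathcal D$, a failure of $\mfp = \mathfrak{t}$ is translated into the existence of a \emph{peculiar cut}: a pre-cut of the ultrapower whose lower and upper cofinalities $(\kappa_1, \kappa_2)$ are both small (with $\kappa_1 = \mfp$) and which has no interpolating element. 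The crux of the argument is to rule out the existence of such peculiar cuts in $\ZFC$ using only combinatorial data available below $\mathfrak{t}$.

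The main obstacle, and the bulk of the Malliaris--Shelah paper, is exactly this exclusion step, which requires a delicate amalgamation argument simultaneously controlling both sides of the cut. It is far too technical to be meaningfully sketched in a few lines. Since the present note only invokes $\mfp = \mathfrak{t}$ as a cited fact, in order to convert Todor\v{c}evi\'c's original formulation in terms of $\mathfrak{t}$ into the stated Question \ref{pquestion} about $\mfp$, I would simply use the theorem as a black box and not reprove it here.
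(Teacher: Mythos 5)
The paper does not prove this fact; it simply cites Malliaris--Shelah \cite{p=t} and uses $\mfp=\mathfrak{t}$ as a black box, which is exactly the disposition you land on. Your high-level sketch (the easy inclusion, the naive recursion, where it breaks down at limits, the peculiar-cut reformulation in regular ultrapowers of $(\omega,<)$) is a fair thumbnail of the Malliaris--Shelah argument, and deferring to the citation is the right call.

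One thing you should fix: the inequality labels in your first paragraph are reversed relative to the content you actually describe. A tower with no pseudointersection being an SFIP family with no pseudointersection shows $\mfp\le\mathfrak{t}$, not $\mathfrak{t}\le\mfp$ as you label it; conversely, the hard Malliaris--Shelah direction, the one you correctly paraphrase as ``every SFIP family of size $<\mathfrak{t}$ has a pseudointersection'' and conclude at the end of the paragraph as ``forcing $\mathfrak{t}\le\kappa=\mfp$,'' is $\mathfrak{t}\le\mfp$, not $\mfp\le\mathfrak{t}$ as you label it. The prose and the construction are internally consistent and correct; only the two $\le$ symbols in the labels are transposed. Also, the obstruction at limit stages is not so much finding a pseudointersection ``compatible with the next $A_\alpha$'' as finding one that stays compatible with the entire remaining tail $\{A_\gamma:\gamma\ge\alpha\}$ so the side condition (SFIP of chain-so-far together with the tail) can be preserved; the single next element is easy to fold in. But these are presentational points in a deliberately informal sketch.
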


Finally most of the work in proving Main Theorem \ref{introtheorem} involves translating between the combinatorics of $2^\omega$ and the topology of $\mathbb R$. As part of this we will often associate elements of $P(\omega)$ with their characteristic functions i.e. treat them as elements of $2^\omega$. In this vein we often abuse notation by saying things like ``for $Y \in 2^\omega$ if $n \in Y$..." which of course really means $Y(n) = 1$. Denote by $[x(n)=1]$ or $[x(n)=0]$ the basic open subsets of $2^\omega$ consisting of all $Y$ so that $Y(n) = 1$ (or $Y(n) = 0$ respectively).  The key tool for translating combinatorics from $\cantor$ to topology on $\mathbb R$ is the Cantor-Lebesgue map. %Towards this let us remind the reader of some facts about the Cantor-Lebesgue map and argue that this proof idea will work.

\begin{definition}
    The {\em Cantor-Lebesgue} map, $\lambda:2^\omega \to [0, 1]$, is defined by $\lambda (x) = \Sigma_{i \in \omega} \frac{x(n)}{2^{n+1}}$. 
\end{definition}

This is a well studied object, see e.g. \cite{weiss} for a thematically similar application. The following facts are well known and easily verified.
\begin{fact}
    \begin{enumerate}
        \item $\lambda$ is continuous (everywhere), onto and one-to-one when restricted to the set of non eventually constant points.
        \item The set of $d \in [0, 1]$ so that $|\lambda^{-1}(\{d\})| > 1$ is simply the set of diadic rationals and for each such $|\lambda^{-1}(\{d\})| = 2$. In particular it is two-to-one on a countable dense set and one-to-one outside of that.
        \item For each $n \in \omega$ we have $\lambda ``[x(n)=1] = \bigcup_{i\in 2^{n}} [\frac{2i+1}{2^{n+1}}, \frac{2i + 2}{2^{n+1}}]$. 
    \end{enumerate}
\end{fact}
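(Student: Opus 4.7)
The plan is to prove each item by direct computation with binary expansions of reals in $[0,1]$, since the Cantor-Lebesgue map $\lambda$ is essentially just the function that reads such an expansion off of a $\{0,1\}$-sequence. Everything then reduces to classical facts about binary representations of reals.

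For item (1), I would deduce continuity from uniform convergence of the truncated partial sums $\lambda_N(x) := \sum_{n < N} x(n)/2^{n+1}$: each $\lambda_N$ depends on only finitely many coordinates of $x$ and is therefore continuous, and the tail satisfies $|\lambda(x) - \lambda_N(x)| \leq 2^{-N}$. Surjectivity follows from the observation that the image of $\lambda$ is compact (as the continuous image of the compact space $\cantor$) and contains the dense set of dyadic rationals, hence equals $[0,1]$; alternatively one produces a preimage by greedy bit-by-bit binary expansion. For injectivity on the set of non-eventually-constant points, suppose $x \neq y$ are both non-eventually-constant and let $n$ be the least index where they disagree; then $\lambda(x) - \lambda(y)$ equals $\pm 2^{-(n+1)}$ plus a tail of absolute value at most $2^{-(n+1)}$, with equality forcing one of $x, y$ to be eventually $1$ and the other eventually $0$ beyond $n$, contradicting the hypothesis.

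Item (2) then follows from (1): a real in $[0,1]$ admits more than one binary expansion precisely when it is a dyadic rational in $(0,1)$, and in that case exactly two expansions, namely the terminating and non-terminating ones; density of the dyadic rationals is standard. For item (3), I would partition the cylinder $[x(n)=1]$ according to the values of $x(0),\ldots,x(n-1)$. Letting $i = \sum_{k<n} x(k) 2^{n-1-k}$, the first $n$ coordinates contribute $i/2^n$, the coordinate at $n$ contributes exactly $2^{-(n+1)}$, and the remaining coordinates sum to an arbitrary element of $[0, 2^{-(n+1)}]$ (by applying item (1) to the shifted Cantor-Lebesgue map on the tail), so $\lambda$ sends the corresponding cylinder onto $[(2i+1)/2^{n+1}, (2i+2)/2^{n+1}]$; taking the union over $i \in 2^n$ gives the claim. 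There is no genuine obstacle here: the fact is stated as background and all three items reduce to routine computation with geometric series, with the only slight wrinkle being the endpoint behavior at $0$ and $1$ in item (2), which is inessential for the topological use later.
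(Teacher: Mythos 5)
The paper does not supply a proof of this fact at all --- it simply asserts that the three items ``are well known and easily verified'' and moves on. Your argument is a correct and complete verification by the standard binary-expansion route: the uniform-convergence argument for continuity, the compact-image-plus-density argument for surjectivity, and the least-disagreement-index argument for injectivity off the eventually constant sequences are all sound, and your computation for item (3) --- writing $i = \sum_{k<n} x(k)2^{n-1-k}$ so that the head contributes $i/2^n$, the $n$th bit contributes $2^{-(n+1)}$, and the tail sweeps out $[0,2^{-(n+1)}]$ --- is exactly the calculation the stated formula encodes. Your observation that the endpoints $0$ and $1$ each have a unique preimage (so the literal statement of item (2) should really say dyadic rationals in $(0,1)$) is a correct and fair nitpick, and you are right that it is immaterial to how the fact is used later in the paper, where only the structure of $\lambda``[x(n)=1]$ as a finite union of closed dyadic intervals matters.
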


Let us just remark in English what the final item above says as this is key in the proof of Theorem \ref{introtheorem}. The image of the set $[x(n)=1] \subseteq \cantor$ under $\lambda$ is defined as follows: first divide $[0,1]$ into $2^{n+1}$ equal pieces (the endpoints will be diadic rationals), then take the union of the even pieces. For instance, the image of $[x(1) = 1]$ is the set $[\frac{1}{4}, \frac{1}{2}] \cup [\frac{3}{4}, 1]$ i.e. we divide the interval into quarters and take the second and fourth ones. 

\section{Adding Pseudointersections with Reasonable Forcing}

In this section we prove Main Theorem \ref{introtheorem}. Modulo some details the main point is the following which we will show the following.

\begin{theorem} \label{killing towers}
    For every tower $\mathcal T$ of size $\aleph_1$ there are $\aleph_1$-dense sets of reals $A_{\mathcal T}$ and $B_{\mathcal T}$ so that if $\P$ is reasonable for $A_{\mathcal T}$ and $B_{\mathcal T}$ then in any generic extension by $\P$ there is a pseudointersection to $\mathcal T$. \label{LOmain}
\end{theorem}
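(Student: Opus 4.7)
The plan is to encode the tower $\mathcal T$ into the spatial distribution of $A_{\mathcal T}$ and $B_{\mathcal T}$ via the Cantor–Lebesgue map $\lambda$. Enumerate $\mathcal T = \{T_\alpha : \alpha < \omega_1\}$ and, for each $\alpha$ and each $n < \omega$, place a point $a^\alpha_n \in A_{\mathcal T}$ whose membership in $\lambda([x(n)=1])$ (the union of odd–indexed dyadic intervals of scale $2^{-(n+1)}$) records the $n$-th bit of $T_\alpha$. Pair this with a point $b^\alpha_n \in B_{\mathcal T}$ that encodes the same bit but under a \emph{different} convention — for instance, composing with a non-trivial autohomeomorphism $\tau$ of $[0,1]$ that swaps the roles of certain dyadic intervals. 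Sprinkle in countably many auxiliary points in each scale-$n$ cell so that both sets are $\aleph_1$-dense in $[0,1]$, while keeping the tower-encoding points well-separated from the padding and from the dyadic rationals (where $\lambda$ fails to be injective).

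Second, I would show that \emph{any} order isomorphism $f : A_{\mathcal T} \to B_{\mathcal T}$ carries bit-level information about $\mathcal T$. Because $f$ is order-preserving and continuous, its image of the scale-$n$ "1-intervals" is pinned down up to a finite error by the $b$-points already distributed across the scale-$n$ grid; so for each $n$ there is $\alpha_n < \omega_1$ such that for every $\alpha > \alpha_n$, the encoding of bit $n$ of $T_\alpha$ by $a^\alpha_n$ must agree with the bit read off from $f(a^\alpha_n)$ in the dual convention. Define $X \subseteq \omega$ by letting $n \in X$ iff $f$ preserves the $n$-th bit above the threshold $\alpha_n$ (with a symmetric rule if necessary to guarantee $X$ is infinite). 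The encoding is designed precisely so that $n \in X$ and $\alpha > \alpha_n$ together force $n \in T_\alpha$, giving $X \subseteq^* T_\alpha$ for every $\alpha$, i.e. a pseudointersection of $\mathcal T$.

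Third, to pass from isomorphisms to "reasonable" forcings, I would use the definition given in Section~2 to show that the generic object commits — on a dense set — to finite partial matchings that pin down each $\alpha_n$ and the bits of $X$. Concretely, for each $n$ the set of conditions which decide the scale-$n$ behavior of $f$ on cofinally many $a^\alpha_n$ is dense, and genericity yields $X$ in $V[\G]$. Since the tower $\mathcal T$ lies in $V$, the resulting $X$ is a genuine pseudointersection.

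The main obstacle I anticipate is calibrating the two encodings. They must be sufficiently \emph{different} that an order-preserving match between $A_{\mathcal T}$ and $B_{\mathcal T}$ is forced, at each scale $n$, to commit to bit-level information about the tower; but they must be sufficiently \emph{similar} that reasonable forcings can still add such an isomorphism (otherwise Theorem~\ref{killing towers} is vacuous). Balancing these requirements — and making the "bit-preservation" rule robust against the two-to-one failure of $\lambda$ at dyadic rationals and against the countable padding — is where I expect the technical heart of the proof to lie.
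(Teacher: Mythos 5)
The central gap is in your second paragraph, where you claim that \emph{any} order isomorphism $f : A_{\mathcal T} \to B_{\mathcal T}$ is ``pinned down up to a finite error'' by the distribution of the $b$-points and therefore commits to bit-level information about the tower. This is not justified and is almost certainly false: an order isomorphism between two $\aleph_1$-dense sets of reals need not respect the dyadic/Cantor--Lebesgue structure in any way, and there is no general reason its action on scale-$n$ intervals should be visible from a countable padding set. Were your claim correct, the argument would apply to isomorphisms produced by $\BA$ alone and you would have answered Question~\ref{pquestion} positively --- an open problem which the paper explicitly does not resolve. The paper's argument lives entirely in the forcing: the Dense Mapping Property is used to show that, for any condition $p$ and any $k$, one can densely extend $p$ to a $q$ that forces some $n\in X\setminus k$ to be ``pushed forward,'' i.e.\ $q\Vdash \dot g``\lambda``[x(n)=1]\subseteq\bigcup_{m\in X\setminus n}\lambda``[x(m)=1]$. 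That is a density argument about the \emph{generic} isomorphism, not an analytic fact about arbitrary isomorphisms; your Step~3 gestures at genericity but it cannot repair Step~2 because the crucial commitment is supposed to have already happened there.

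Your construction of $A_{\mathcal T}$ and $B_{\mathcal T}$ is also more elaborate than necessary and somewhat off target. The paper simply takes $A=\{\omega\setminus X_\alpha : \alpha<\omega_1\}$ (viewed as characteristic functions in $2^\omega$ and then pushed into $(0,1)$ by $\lambda$) and lets $B$ be any $\aleph_1$-dense family of characteristic functions almost disjoint from the top set $X$ of the tower; no per-$(\alpha,n)$ marker points, no auxiliary autohomeomorphism $\tau$, and no ``padding'' are needed. The resulting pseudointersection is not defined via thresholds $\alpha_n$; it is the set of $n\in X$ for which the push-forward condition holds, and a short argument (Lemma~\ref{little}) using almost-disjointness of $B$ from $X$ shows this set is almost contained in every $X_\alpha$. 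I'd encourage you to look for a density argument driven by the Dense Mapping Property rather than trying to engineer the sets so that \emph{every} isomorphism is forced to reveal the tower --- the former is achievable, the latter would be a breakthrough.
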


The definition of reasonable is given below. Main Theorem \ref{introtheorem} follows from this as, given an iteration of reasonable forcing notions which force every pair of $\aleph_1$-dense sets to be isomorphic we would cofinally often have to deal with sets of the form $A_\mathcal T$ and $B_\mathcal T$ as in the theorem above and therefore every tower of size $\aleph_1$ in the extension would have a pseudointersection. Therefore $\mathfrak{t}$ and hence $\mathfrak{p}$ would necessarily need to be larger than $\aleph_1$. Let us now define reasonableness. 

\begin{definition}
    
Let $A$ and $B$ be $\aleph_1$-dense sets of reals. A partial order $\P$ is called {\em reasonable} for $A$ and $B$ if it satisfies the following three conditions.
\begin{enumerate}
\item (Finite Isomorphism) Every condition $p \in \P$ is a finite partial isomorphism from $A$ to $B$. 
    \item (Restriction) If $p \in \P$ and $q = p \hook Z$ for some finite $Z \subseteq {\rm dom}(p)$ then $q \in \P$.
    \item (Dense Mapping Property) Let $p \in \P$ and $x \in A \setminus {\rm dom}(p)$. Suppose $x_0 < x < x_1$ so that $x_0, x_1 \in {\rm dom}(p)$ with $x_0$ the largest such and $x_1$ the smallest such. If $U \subseteq \mathbb R$ is an open interval contained in $(p(x_0), p(x_1))$ then there is a $q \leq p$ so that $x \in {\rm dom}(q)$ and $q(x) \in U$. %Similarly, for adding points to the range i.e. if $y \in B \setminus {\rm range}(p)$ with $y_0 < y < y_1$ so that $y_0, y_1 \in {\rm range}(p)$ with $y_0$ the largest such and $y_1$ the smallest such then for any $U \subseteq \mathbb R$ an open interval contained in $(p^{-1}(y_0), p^{-1}(y_1))$ then there is a $q \leq p$ so that $y \in {\rm range}(q)$ and $q^{-1}(x) \in U$.
\end{enumerate}

\end{definition}

The dense mapping property is the most important. It states roughly that, within the restrictions given by $p$, for any $x \notin{\rm dom}(p)$ there is a dense set of elements of $B$ into which we can choose to map $x$ by an extension of $p$. By the restriction condition we can take this strengthening to in fact be simply $p$ plus a single additional point - namely $\langle x, q(x)\rangle$, a fact we will use implicitly in the proof of Lemma \ref{mainlemma} below. Note that it is easy to check that if $\P$ is reasonable for $A$ and $B$ then it will generically add a linear order embedding from $A$ to $B$ since, by the dense mapping property, it is dense to add any particular $a \in A$ to the domain. Thus if $G \subseteq \P$ is generic then $\bigcup G$ will in fact be an embedding from $A$ to $B$. The notion of reasonable includes of course very non-proper cases, such as the collection of {\em all finite partial isomorphisms from} $A$ to $B$, but also the forcing notions used to produce $\BA$ in \cite{Baum73}, \cite{BaumPFA}, \cite{ARS85} and \cite[Chapter 15]{todorcevicnotesonforcingaxioms}. We do not in fact know of a way of producing isomorphisms generically between a given $A$ and $B$ which are not reasonable (modulo some kind of preparation such as collapsing to force $\CH$ such as in \cite{BaumPFA}).

\begin{remark}
    In this vein we also mention (for the reader in the know) the forcing of \cite[Theorem 93]{todorcevicnotesonforcingaxioms} used to produce an embedding of $B$ into $X$ for $B$ of size $\aleph_1$ and $X$ which is $\aleph_1$-dense. This forcing notion consists of pairs $p = (f_p, N_p)$ where $f_p$ is a finite partial embedding and $N_p$ is a finite continuous $\in$-chain of elementary submodels used as side conditions (alongside some restrictions indicating how $f_p$ interacts with $N_p$). This forcing is not literally reasonable because of the side condition part but its ``working part" forms a reasonable poset and it is easy to check that the arguments of Lemma \ref{mainlemma} below apply equally well in this case. As it would take us too far afield we leave the details to the interested reader to check. 
\end{remark}

Next we prove the following lemma which allows us to add pseudointersections. Fix a tower of size $\aleph_1$ for the rest of this section, say $\mathcal T = \{X_\alpha \; | \; \alpha < \omega_1\}$ where $\alpha < \beta$ implies $X_\beta \subseteq^* X_\alpha$. Moreover without loss we assume $X = X_0$ is infinite, co-infinite. Finally, again without loss we assume that $\mathcal T$ is $\aleph_1$-dense. 

\begin{lemma}
Let $B \subseteq 2^\omega$ be $\aleph_1$-dense consisting of elements which are infinite and almost disjoint from $X$. Let $A = \{\omega \setminus X_\alpha\mid \alpha < \omega_1\}$. Note that $A$ is also $\aleph_1$-dense. If $f:A \to B$ is a function so that there are infinitely many $n \in X$ for which $f``[x(n) =1] \subseteq \bigcup_{m \in X \setminus n} [x(m)=1]$ then $\mathcal T$ has a pseudointersection. \label{little}
\end{lemma}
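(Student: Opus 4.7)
The plan is to show that the infinite set of $n$'s witnessing the hypothesis is itself a pseudointersection of $\mathcal T$. Set
\[
N \;=\; \{n \in X : f``[x(n)=1] \subseteq \bigcup_{m \in X \setminus n}[x(m)=1]\},
\]
which is infinite by assumption. Since $N \subseteq X = X_0$, it suffices to prove $N \subseteq^* X_\alpha$ for every $\alpha < \omega_1$; together with $N$ being infinite this is precisely the definition of a pseudointersection of the tower $\mathcal T$.

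To carry this out I would fix $\alpha < \omega_1$ and work with $y_\alpha := \omega \setminus X_\alpha \in A$, whose characteristic function satisfies $y_\alpha(n) = 1$ exactly when $n \notin X_\alpha$. The one essential input I would use is the defining property of $B$: since $f(y_\alpha) \in B$ is almost disjoint from $X$, the set $f(y_\alpha) \cap X$ is finite, and I can fix $k_\alpha \in \omega$ strictly above its maximum.

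The key step is then the following short argument for each $n \in N$ with $n \geq k_\alpha$. If $y_\alpha(n) = 1$, the condition defining $N$ applied to $y_\alpha$ would yield some $m \in X$ with $m \geq n$ and $f(y_\alpha)(m) = 1$ --- that is, an element of $f(y_\alpha) \cap X$ of size at least $n \geq k_\alpha$, contradicting the choice of $k_\alpha$. Hence $y_\alpha(n) = 0$, i.e.\ $n \in X_\alpha$. This gives $N \setminus k_\alpha \subseteq X_\alpha$, so $N \subseteq^* X_\alpha$, completing the argument.

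No step should be a genuine obstacle; the entire content of the lemma is to observe that the almost-disjointness of elements of $B$ from $X$ --- a hypothesis on $B$ that looks unmotivated in isolation --- is exactly the finiteness condition needed to convert the topological restriction on $f$ into the pseudointersection property of $N$. Note in particular that the argument never uses any injectivity, surjectivity, or order-preservation of $f$: a bare function satisfying the stated condition already suffices.
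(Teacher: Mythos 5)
Your proof is correct and is essentially identical to the paper's: you define the same set of witnessing $n$'s, use almost-disjointness of $f(\omega\setminus X_\alpha)$ from $X$ to get a bound $k_\alpha$, and show $n\ge k_\alpha$ in $N$ forces $n\in X_\alpha$. The only difference is cosmetic --- you argue directly that $N\subseteq^* X_\alpha$ while the paper phrases it as a contradiction from $N\nsubseteq^* X_\alpha$.
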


We remark that no assumption on $f$ is made besides it being a function. In particular we do not assume it's continuous nor need it be defined on all of $\cantor$. Also, note that the expression $$f``[x(n) =1] \subseteq \bigcup_{m \in X \setminus n} [x(m)=1]$$ means that if $n \in Y$ then there is an $m \geq n$ so that $m \in X \cap f(Y)$. 

\begin{proof}
Let $X_\infty = \{n \in X \; | \; f``[n =1] \subseteq \bigcup_{m \in X \setminus n} [m=1]\}$. By assumption this is an infinite set. We want to show that it is in fact a pseudointersection to $\mathcal T$. Suppose not and fix $\alpha < \omega_1$ so that $X_\infty \nsubseteq^* X_\alpha$. Let $Y \in B$ be such that $f(\omega \setminus X_\alpha) = Y$. Since $Y$ is almost disjoint from $X$ there is a $k < \omega$ so that $Y \cap X \subseteq k$. Let $n \in X_\infty \setminus X_\alpha$ be larger than $k$. Since $n \notin X_\alpha$ we have that $n \in \omega \setminus X_\alpha$ and hence there is an $m\geq n$ in $X$ so that $m \in Y =  f(\omega\setminus X_\alpha)$. But this contradicts the choice of $k$ and $n$. 

\end{proof}

We're now ready to prove Theorem \ref{killing towers}. The idea is to take $A, B \subseteq 2^\omega$ which are $\aleph_1$-dense in $2^\omega$ as defined in Lemma \ref{little} and add an isomorphism from the image of $A$ and to the image of $B$ under the Cantor-Lebesgue map and then argue that this suffices. The following numerical fact will be useful and follows readily from what was discussed about $\lambda$ in the preliminaries. For each $n < \omega$ let us call a closed diadic interval {\em good at} $n$ if it is of the form $[\frac{2i+1}{2^{n+1}}, \frac{2i + 2}{2^{n+1}}]$. In other words, the set $\lambda``[x(n) = 1]$ is simply the union of the $2^{n}$ many good at $n$ intervals. 

\begin{proposition}
    Let $\epsilon \in (0, 1)$ and let $I \subseteq (0, 1)$ be an open interval of length $\epsilon$. If $n$ is large enough that $\frac{3}{2^{n+1}} < \epsilon$ then $I$ contains a closed interval good at $n$. \label{numbers}
\end{proposition}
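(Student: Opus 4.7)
The plan is to reduce the claim to a purely arithmetic statement about integers in an interval. Write $\delta := 2^{-(n+1)}$, so the hypothesis reads $\epsilon > 3\delta$, and partition $[0,1]$ into the $2^{n+1}$ closed intervals $J_j := [j\delta, (j+1)\delta]$ for $0 \le j < 2^{n+1}$. Observe that $J_j$ is good at $n$ precisely when $j$ is odd; in particular, out of any two consecutive intervals $J_j, J_{j+1}$, exactly one is good.

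Next, I would strengthen the goal slightly: rather than producing a single good $J_j$ inside $I = (a,b)$, I will show that $I$ fully contains some pair of consecutive partition intervals $J_j, J_{j+1}$. This is desirable because one of them is automatically good. The inclusion $J_j \cup J_{j+1} = [j\delta, (j+2)\delta] \subseteq (a,b)$ amounts to an integer $j$ satisfying $a < j\delta$ and $(j+2)\delta < b$, that is, $j$ lying in the open interval
\[
\left( \tfrac{a}{\delta},\ \tfrac{b}{\delta} - 2 \right).
\]
The length of this interval equals $\tfrac{b-a}{\delta} - 2 = \tfrac{\epsilon}{\delta} - 2 > 3 - 2 = 1$, by the hypothesis. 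Any open interval of real numbers of length strictly greater than $1$ contains an integer, so such a $j$ exists.

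It remains to confirm that the integer $j$ produced actually gives a legal partition index, i.e.\ that $0 \le j$ and $j+2 \le 2^{n+1}$. Since $I \subseteq (0,1)$ we have $a \ge 0$ and $b \le 1$, so $j > a/\delta \ge 0$ forces $j \ge 1$, and $j < b/\delta - 2 \le 2^{n+1} - 2$ forces $j+2 \le 2^{n+1} - 1 < 2^{n+1}$; both bounds hold automatically. The only delicate point in the whole argument is the elementary lemma that an open real interval of length greater than $1$ must contain an integer, and this is immediate from considering $\lfloor b/\delta \rfloor - 2$ (or similar). No step presents a genuine obstacle; the content is just the bookkeeping that $3\delta$ is the correct threshold so that the residual interval for $j$ has length exceeding $1$ rather than merely exceeding $0$.
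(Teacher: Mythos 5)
Your proof is correct and follows essentially the same strategy as the paper's: partition $[0,1]$ into the $2^{n+1}$ dyadic intervals of length $2^{-(n+1)}$, note that any two consecutive pieces include one good one, and use the length hypothesis $\epsilon > 3\cdot 2^{-(n+1)}$ to fit two consecutive pieces inside $I$. The only cosmetic difference is the bookkeeping: the paper picks the largest $i$ and smallest $j$ with $I \subseteq [\frac{i}{2^{n+1}}, \frac{j}{2^{n+1}}]$ and deduces $j - i > 3$, whereas you directly locate an integer in an open interval of length $> 1$; both are routine implementations of the same idea.
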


%The bound is not sharp as it does not really matter.

\begin{proof}
    Fix $I$, $\epsilon$ and $n$ as in the statement of the proposition. Since we can cover $[0, 1]$ by the intervals $[\frac{i}{2^{n+1}}, \frac{i+1}{2^{n+1}}]$ some convex subset of these will cover $I$. Thus let us pick $0\leq i < j \leq 2^{n+1}$ so that $I \subseteq [\frac{i}{2^{n+1}}, \frac{j}{2^{n+1}}]$ with $i$ largest and $j$ smallest with this property. Note that this minimality then guarantees that $I \supseteq [\frac{i+1}{2^{n+1}}, \frac{j-1}{2^{n+1}}]$. By the assumption on $\epsilon$ we must have that $j -i > 3$. But this means that there is a $i< k < j$ so that $[\frac{k}{2^{n+1}}, \frac{k + 1}{2^{n+1}}]$ and $[\frac{k+1}{2^{n+1}}, \frac{k + 2}{2^{n+1}}]$ are both contained in $I$ and one of these is good at $n$ (depending on if $k$ is odd or even). 
\end{proof}

We are now ready to prove the main lemma towards the proof of Theorem \ref{LOmain}. Before stating it, we specify some terminology. Suppose $A, B \subseteq (0, 1)$ and $\P$ is reasonable for $A$ and $B$ (note that the difference between $(0, 1)$ and $\mathbb R$ is immaterial but will allow us to more easily apply the Cantor-Lebesgue map). We will denote by $\dot{g}_{A, B}$ the canonical name for the generic isomorphism from $A$ to $B$. When we work in the extension then this will be called $g_{A, B}$. Since this isomorphism determines uniquely an isomorphism from $[0, 1]$ to $[0, 1]$ we will confuse the two. 

\begin{lemma} \label{mainlemma}
   Suppose $X \in [\omega]^\omega$. Let $A, B \subseteq (0, 1)$ be $\aleph_1$-dense sets containing no diadic rationals so that the following holds:
   \begin{enumerate}
       \item For every finitely many $a_0, ..., a_{n-1} \in A$ there are infinitely many $l \in X$ so that no $a_i$ is in an interval which is good for $l$. In other words there is an infinite set of $l \in X$ so that $a_0, ..., a_{n-1} \in \lambda``[x(l) = 0]$. \label{assumption on A}
       %\item The same condition on $B$ - namely for every finitely many $b_0, ..., b_{n-1} \in B$ there are infinitely many $l \in X$ so that no $b_i$ is in an interval which is good for $l$. \label{assumption on B}
   \end{enumerate}

   Then, if $\P$ is reasonable for $A$ and $B$, then $\P$ forces that the generic isomorphism $\dot{g}_{A, B}$ will have the property that there are infinitely many $n \in X$ so that for every closed interval $I$ which is good for $n$, there is an $m \in X \setminus n$ so that $\dot{g}_{A, B}$ maps $I$ into some interval good for $m$. In other words, there are infinitely many $n \in X$ so that $\dot{g}_{A, B} ``(\lambda``[x(n) = 1]) \subseteq \bigcup_{m \in X \setminus n} \lambda ``[x(m) = 1]$. \label{control}
\end{lemma}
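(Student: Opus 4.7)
The approach is a density argument: I show that for every $p \in \P$ and every $N < \omega$, I can construct $q \le p$ and find $n \in X$ with $n \ge N$ such that $q$ already forces the required containment at level $n$. Enumerate ${\rm dom}(p) = \{a_0 < \ldots < a_{k-1}\}$ with $b_i := p(a_i)$. By assumption (\ref{assumption on A}), pick $n \in X$ with $n \ge N$ such that no $a_i$ lies in any good-at-$n$ interval; it follows that every good-at-$n$ closed interval is entirely contained in one of the open intervals $I_j := (a_{j-1}, a_j)$ cut out by ${\rm dom}(p)$ (with $a_{-1} := 0$ and $a_k := 1$). List the (finitely many) good-at-$n$ intervals inside each $I_j$ as $I_j^{(1)} < \ldots < I_j^{(k_j)}$.

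Next I set up the target windows. Any extension of $p$ must map each $I_j^{(s)}$ into $J_j := (b_{j-1}, b_j)$. I pick a single $m \in X$ with $m \ge n$ and, inside each $J_j$, an ordered family of $k_j$ pairwise disjoint closed good-at-$m$ intervals $K_j^{(1)} < \ldots < K_j^{(k_j)}$. This is possible for any sufficiently large $m \in X$: divide each $J_j$ into $k_j$ equal subintervals and apply Proposition \ref{numbers}, which only requires $3/2^{m+1}$ to be below the length of the smallest such subinterval. These $K_j^{(s)}$ will serve as the target windows for the corresponding $g_{A, B}(I_j^{(s)})$.

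The key construction is a flanking step. For each $I_j^{(s)} = [\ell, r]$ I choose two points $a_{L, j}^{(s)}, a_{R, j}^{(s)} \in A \setminus {\rm dom}(p)$ with $a_{L, j}^{(s)} < \ell$ and $r < a_{R, j}^{(s)}$; this is possible by density of $A$ and the fact that there are positive gaps between consecutive good-at-$n$ intervals inside $I_j$ and between the extremal ones and the endpoints of $I_j$. I arrange matters so that the whole collection of new points is consistent with the global order. By $\aleph_1$-density of $B$, pick targets $y_{L, j}^{(s)} < y_{R, j}^{(s)}$ in $K_j^{(s)} \cap B$, and set
\[q := p \cup \{(a_{L, j}^{(s)}, y_{L, j}^{(s)}),\ (a_{R, j}^{(s)}, y_{R, j}^{(s)}) : j \le k,\ s \le k_j\}.\]
So constructed, $q$ is a finite partial order-isomorphism from $A$ to $B$, and $q \in \P$ follows by adding the new pairs one at a time, at each step combining the dense mapping property with the restriction condition as noted in the paper.

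Finally, to verify that $q$ forces the property at $n$, fix a generic $G \ni q$ and let $g_{A, B}$ denote the resulting homeomorphism of $[0, 1]$. Then $g_{A, B}(a_{L, j}^{(s)}) = y_{L, j}^{(s)}$ and $g_{A, B}(a_{R, j}^{(s)}) = y_{R, j}^{(s)}$, and by monotonicity and continuity
\[g_{A, B}(I_j^{(s)}) \subseteq g_{A, B}([a_{L, j}^{(s)}, a_{R, j}^{(s)}]) = [y_{L, j}^{(s)}, y_{R, j}^{(s)}] \subseteq K_j^{(s)},\]
a good-at-$m$ interval with $m \in X \setminus n$, as required. The main obstacle is conceptual: $g_{A, B}(I_j^{(s)})$ is not pinned down by finitely many values of $g_{A, B}$ on $A$, so one cannot force containment by specifying $g_{A, B}$ at points \emph{inside} $I_j^{(s)}$; the flanking trick circumvents this by reducing the problem to the two boundary values $g_{A, B}(a_{L, j}^{(s)})$ and $g_{A, B}(a_{R, j}^{(s)})$, which a single condition can fix.
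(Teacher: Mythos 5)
Your argument is essentially the paper's argument: find $n\in X$ so that $\operatorname{dom}(p)$ avoids every good-at-$n$ interval, flank each good-at-$n$ interval with points of $A$ not in $\operatorname{dom}(p)$, use Proposition~\ref{numbers} to locate target good-at-$m$ windows inside the gaps of $\operatorname{ran}(p)$ in the same combinatorial pattern, and push the flanking points into the target windows by repeated use of the dense mapping property (shrinking back via the restriction property at each step). Two points need attention.

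First, the claim that ``there are positive gaps between~\ldots~the extremal ones and the endpoints of $I_j$'' is false for the rightmost good-at-$n$ interval $t_{2^n-1}=[1-2^{-(n+1)},1]$, which shares its right endpoint with $1=a_k$. Since $A\subseteq(0,1)$, there is no $a\in A$ with $a>1$, so $t_{2^n-1}$ cannot be flanked on the right as written, and the step fails there. The paper handles exactly this case by taking the right flanking point to be $1$ itself (treated formally as mapped to $1$) and choosing the corresponding target to be the last good-at-$m$ interval $[1-2^{-(m+1)},1]$, which also terminates at $1$; you need the same patch.

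Second, you pre-select values $y^{(s)}_{L,j},y^{(s)}_{R,j}\in B$ and then define $q:=p\cup\{\ldots\}$ and assert $q\in\P$. Reasonableness does not guarantee that this particular finite isomorphism is a condition: the dense mapping property only promises \emph{some} extension sending each new point into the prescribed window, not the extension hitting your chosen $y$'s. The correct statement, which your parenthetical already gestures at, is that repeated application of the dense mapping property together with restriction yields \emph{some} $q\leq p$ in $\P$ with $q(a^{(s)}_{L,j}),q(a^{(s)}_{R,j})\in K^{(s)}_j$ for all $j,s$; the verification at the end uses only membership in the windows, so the conclusion is unaffected, but the exposition should not claim the specific pre-built $q$ lies in $\P$.
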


In the proof it will be useful to order (closed or open) intervals $I, J \subseteq (0, 1)$ as $I < J$ if and only if each $a \in I$ is less than every $b \in J$. Similarly we will write $a < I < b$ for elements $a < b \in (0, 1)$ if $a$ is less than the left endpoint of $I$ and $b$ is greater than the right endpoint of $I$. 

\begin{proof}
    Fix $A, B, X$ and $\P$ as in the statement of the lemma. Clearly it suffices to show that if $p \in \P$, $k \in \omega$ there the set of $q \leq p$ which force some $n \in X \setminus k$ to be such that $\dot{g}_{A, B} ``(\lambda``[x(n) = 1]) \subseteq \bigcup_{m \in X \setminus n} \lambda ``[x(m) = 1]$ is dense. Fix $p$. The domain of $p$ is some set (enumerated in order) $x_1 < ... < x_m$. Similarly the range is some set $z_1 < ... < z_m$ and, since it is an order isomorphism, it must be the case that for each $i \in [1, m]$ we have $p(x_i) = z_i$. The indexing begins at 1 to accommodate the following convenient notation. Let us treat $p$ as having also $0$ and $1$ in the domain and range with $p(0) = 0$ and $p(1) = 1$. Let $x_0 = 0 = z_0$ and $x_{m+1} = 1 = z_{m+1}$. 

    By (\ref{assumption on A}) in the statement of the lemma there is some $n \in X \setminus k$ so that no $x_i$ (other than $x_{m+1} = 1$ of course) is in an interval which is good for $n$. Enumerate in order the $2^{n}$ many closed intervals $t_0 < t_1 < ... < t_{2^{n}-1}$ which are good for $n$. Note that for no $j< m$ and is it the case that $x_j \in t_i$ for any $i < 2^n$. Thus, for each $i < 2^{n}$ there are $x_{j_i} < x_{j_i+1} \in {\rm dom}(p)$ so that $t_i \subseteq (x_{j_i}, x_{j_i + 1})$. Note that $x_{j_i}$ and $x_{j_{i}+1}$ may be the same for different $j$'s if e.g. the domain of $p$ misses multiple consecutive good for $n$ intervals. By the density of $A$, for each $i < 2^n - 1$ we can choose $a_0^i < a_1^i \notin {\rm dom}(p)$ so that the following hold:
    \begin{enumerate}
        \item $x_{j_i}< a^i_0 < t_i <  a^i_1 < x_{j_{i}+1}$
        \item $a^i_0 < t_i < a^i_1 < a^{i+1}_0 < t_{i+1} < a^{i+1}_1$
    \end{enumerate}
    The final good interval, $t_{2^n -1} : = [\frac{2^{n+1}-1}{2^n+1} , 1]$ requires slightly more care in notation but not conceptually. Here pick simply an $a^{2^n-1}_0 \in A$ greater than $x_n$ but less than $\frac{2^{n+1}-1}{2^n+1}$ and let $a^{2^n-1}_1 = 1$. In English, this means roughly in total that we choose a small open interval around each $t_i$ with endpoints in $A \setminus {\rm dom}(p)$ respecting the order on the $t_i$'s and refining the intervals $(x_{j_i}, x_{j_i + 1})$. 

\begin{claim}
    For each $i < m$, and each natural number $j > 0$ there is an $l \in X \setminus n$ and $j$ many distinct intervals $\{s^i_k \mid k < j\}$ which are each good for $l$ so that $s_i \subsetneq (z_{i}, z_{{i}+1})$.
\end{claim}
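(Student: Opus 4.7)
The claim is a purely geometric statement about the range side of $p$, involving no structure from the domain or the set $A$. The plan is to fix $i$ (with $0 \le i \le m$, which is the intended range of the claim despite the apparent typo $i<m$) and $j > 0$, and set $\epsilon = z_{i+1} - z_i > 0$, the length of the open range interval into which one wishes to fit $j$ good-for-$l$ closed intervals.

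First, I would carve $(z_i, z_{i+1})$ into $j$ pairwise disjoint open subintervals of equal positive length. Concretely, one may take
\[
I_k = \left(z_i + \tfrac{(2k-2)\epsilon}{2j},\; z_i + \tfrac{(2k-1)\epsilon}{2j}\right) \qquad (k = 1, \ldots, j),
\]
each of length $\delta := \epsilon/(2j) > 0$, and pairwise separated by gaps of the same length. All $I_k$ lie strictly inside $(z_i, z_{i+1})$.

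Next, I would apply Proposition \ref{numbers}: for any open subinterval of $(0,1)$ of length $\delta$ and any $l$ with $\tfrac{3}{2^{l+1}} < \delta$, that subinterval contains a closed interval good for $l$. Since $X$ is infinite, one may choose a single $l \in X$ with $l \ge n$ and $\tfrac{3}{2^{l+1}} < \delta$. For this $l$, Proposition \ref{numbers} produces, for each $k$, a closed interval $s^i_k \subseteq I_k$ good for $l$. The $s^i_k$ are automatically distinct (indeed pairwise disjoint) since the $I_k$ are, and each lies strictly inside $(z_i, z_{i+1})$, which is exactly what the claim asserts.

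The only part that could superficially cause trouble is the requirement $l \in X$ rather than just $l \in \omega$, but this is harmless because $X$ is infinite and so contains arbitrarily large elements. Consequently I do not foresee any substantive obstacle in proving the claim; it is essentially a preparatory geometric fact. The real work of the lemma lies in the step that follows, where one uses these $j$ good intervals $s^i_k$ together with the Dense Mapping Property to extend $p$ to a condition $q$ forcing $\dot g_{A,B}$ to send each $t_i$ into one of them, thereby certifying the containment $\dot g_{A,B}``(\lambda``[x(n)=1]) \subseteq \bigcup_{m \in X\setminus n} \lambda``[x(m)=1]$.
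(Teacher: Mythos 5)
Your proof is correct and follows essentially the same route as the paper: fix $i$ and $j$, partition $(z_i, z_{i+1})$ into $j$ disjoint open pieces, pick $l \in X \setminus n$ large enough that $\frac{3}{2^{l+1}}$ is smaller than the piece length, and invoke Proposition~\ref{numbers} once per piece. Your version is slightly more explicit about keeping the subintervals disjoint, but the underlying argument is identical to the paper's.
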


\begin{proof}
    Let $\epsilon < \frac{z_{i+1} - z_{i}}{j}$ and let $l \in X\setminus n$ be large enough that $\frac{3}{2^{l+1}} < \epsilon$. The claim now follows immediately by proposition \ref{numbers} by applying it $j$ many times to each of the intervals given by partitioning $(z_{i}, z_{i + 1})$ into $j$ many equal pieces. 
\end{proof}
For each $i < m$ let $j_i$ be the number of intervals of the form $t_j$ included in $(x_{i}, x_{i + 1})$. Applying the claim to $i$ and $j_i$ we obtain an $l_i \in X \setminus n$ and $j_i$ many intervals which are good for $l_i$, enumerated in order $s^i_{k} \subsetneq (z_{i}, z_{{i}+1})$. Moreover, since we applied the claim finitely many times and the proof of it in fact worked on a tail of $l\in X$ we can assume that $l_i$ is the same $l \in X \setminus n$ for all $i$. This is not strictly speaking necessary for the proof, or the application we intend of this lemma, however it is convenient insofar as it greatly simplifies notation so we use it. Thus fix a single $l$ that works as above as well as the intervals $s_i$. 

Putting all of these objects together we get a sequence (in order) of $2^{n}$ many intervals $s_i$ which are good for $l$ and sit in the relation to the range of $p$ in exactly the same way as the $t_i$'s sit in relation to the domain of $p$. We can therefore apply the dense mapping property $2 \times 2^n$ many times (by a finite induction) to find a condition $q \leq p$ so that for all $i < 2^n$ we have $q(a^i_0), q(a^i_1) \in s_i$ (and these numbers are defined). This $q$ is in fact the condition we need which completes the proof. 

This is in fact immediate from the definitions but we acknowledge there are a lot of definitions. Thus, in an effort to make the proof more accessible to a reader lost in the sea of indices let us illustrate this last point with a concrete example. Suppose there is a single $t_i$ contained between $x_0$ and $x_1$. We found $a^i_0, a^i_1\in A$ so that $x_0 < a^i_0 < t_i < a^i_1 < x_1$ held. Moreover we found some $l > n$ in $X$ and an interval $s_i$ which is good for $l$ in $(z_0, z_1)$. The condition $q$ now maps $a^i_0, a^i_1$ into the interval $s_i$ - which is legal because it respects the order of the elements of $A$ - so we get $(q(a^i_0), q(a^i_1)) \subsetneq s_i$. But then, since we are adding a linear order isomorphism $q$ must force that the image of $t_i$ under the generic isomorphism is included in $s_i$. Since we do this simultaneously for all $i$ we obtain that $q \forces \dot{g}_{A, B}``\lambda [x(n) = 1] \subseteq \lambda ``[x(l)= 1]$. 
\end{proof}

Let us now put this all together to prove Theorem \ref{LOmain}.

\begin{proof}[Proof of Theorem \ref{LOmain}]
Let $\mathcal T$ be a tower of size $\aleph_1$, enumerated as, say $\{X_\alpha\; | \; \alpha \in \omega_1\}$ with $X = X_0$ coinfinte and $\mathcal T$ $\aleph_1$-dense in $2^\omega$. Let $A = \{\omega \setminus X_\alpha \mid \alpha \in \omega_1\}$ and let $B$ be some $\aleph_1$-dense set of $2^\omega$ consisting of only elements which are almost disjoint from $X$. Note that $\lambda``A$ and $\lambda``B$ are both $\aleph_1$-dense. Let $\P$ be reasonable for $\lambda``A$ and $\lambda``B$. Work in $V[g_{\lambda``A, \lambda``B}]$. For any finitely many elements of $\lambda``A$ there is an infinite subset of $X$ disjoint from all of these since $\mathcal T$ is a tower of almost subsets of $X$. Therefore we can apply Lemma \ref{control} to obtain that there are infinitely many $n \in X$ so that $g_{\lambda ``A, \lambda ``B} ``(\lambda``[x(n) = 1]) \subseteq \bigcup_{m \in X \setminus n} \lambda ``[x(m) = 1]$. Finally now consider the map $f:A \to B$ defined by $\lambda^{-1} \circ g_{\lambda``A, \lambda``B} \circ \lambda \hook A$. It follows that $f$ satisfies the hypotheses of Lemma \ref{little} so we are done\footnote{We don't need it but this is actually a continuous map on a dense $G_\delta$ subset of $2^\omega$ containing $A$.}. 
    
\end{proof}

An immediate corollary of this is the following.
\begin{corollary}
    If $\{\P_\alpha, \dot{\Q}_\alpha \mid \alpha < \delta\}$ is a countable support iteration of proper forcing notions or a finite support iteration of ccc forcing notions for that for all pairs $A, B \subseteq \mathbb R$ which are $\aleph_1$-dense in the extension there is an $\alpha$ so that $A, B \in V[G_\alpha]$ and  $\forces_\alpha$``$\dot{\Q}_\alpha$ is reasonable for $\check{A}$ and $\check{B}$" then $\forces_\delta$``$\mfp > \aleph_1$".
\end{corollary}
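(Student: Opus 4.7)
The plan is to reduce the conclusion to $\mathfrak{t} > \aleph_1$ via the Malliaris--Shelah theorem and then kill every potential witness to $\mathfrak{t} = \aleph_1$ by invoking Theorem \ref{killing towers}. Concretely, I would fix a tower $\mathcal T = \{X_\alpha \mid \alpha < \omega_1\}$ in $V[G_\delta]$ and show that $\mathcal T$ has a pseudointersection there, which suffices since $\mfp = \mathfrak{t}$ in $\ZFC$.

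The first step is to capture $\mathcal T$ at some intermediate stage. Because $\{\P_\alpha, \dot{\Q}_\alpha \mid \alpha < \delta\}$ is either a countable support proper iteration or a finite support ccc iteration, $\omega_1$ is preserved at every stage, and standard reflection arguments (the $\aleph_2$-cc of such iterations over a model of $\CH$, together with the fact that $\omega_1$-sequences of reals do not appear for the first time at stages of cofinality $\geq \omega_2$) yield a $\beta < \delta$ with $\mathcal T \in V[G_\beta]$. Working inside $V[G_\beta]$, I would then run the construction from the proof of Theorem \ref{LOmain} on $\mathcal T$: set $A_{\mathcal T} = \lambda``\{\omega \setminus X_\alpha \mid \alpha < \omega_1\}$ and let $B_{\mathcal T}$ be the $\lambda$-image of any $\aleph_1$-dense family in $2^\omega$ of infinite sets almost disjoint from $X = X_0$, both of which are $\aleph_1$-dense in $\mathbb R$.

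With $A_{\mathcal T}, B_{\mathcal T}$ in hand, I would invoke the hypothesis of the corollary on this pair to produce some $\alpha \geq \beta$ with $A_{\mathcal T}, B_{\mathcal T} \in V[G_\alpha]$ and $\forces_\alpha$ ``$\dot{\Q}_\alpha$ is reasonable for $\check A_{\mathcal T}$ and $\check B_{\mathcal T}$''. Theorem \ref{killing towers} then says that forcing with $\dot{\Q}_\alpha$ over $V[G_\alpha]$ adds a pseudointersection to $\mathcal T$, and pseudointersections are upward absolute, so one survives into $V[G_\delta]$. Since $\mathcal T$ was arbitrary, $V[G_\delta] \models \mathfrak{t} > \aleph_1$, whence $\mfp > \aleph_1$.

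The main obstacle is the capturing step: one must know that every $\omega_1$-sequence of reals in $V[G_\delta]$ already lies in some $V[G_\beta]$ with $\beta < \delta$. For the standard frameworks (length $\omega_2$, $\CH$ in the ground model) this is routine and in fact implicit in the hypothesis already stated for $\aleph_1$-dense pairs, but one should record it as part of the ambient setting. Beyond that, the argument is pure bookkeeping: attach to each tower its designated pair of $\aleph_1$-dense sets and appeal to Theorem \ref{killing towers}.
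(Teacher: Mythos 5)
Your argument is correct and matches the route the paper itself sketches for the Main Theorem (the paper leaves this corollary unproved, calling it ``immediate''): reduce to $\mathfrak{t} > \aleph_1$ via Malliaris--Shelah, attach to each tower $\mathcal T$ in the final model the pair $A_{\mathcal T}, B_{\mathcal T}$ from the proof of Theorem \ref{LOmain}, and invoke the corollary's hypothesis together with Theorem \ref{killing towers} to obtain an upward-absolute pseudointersection. One small streamlining: the separate reflection step for capturing $\mathcal T$ at a stage $\beta < \delta$ is redundant, since the corollary's hypothesis already supplies, for the pair $A_{\mathcal T}, B_{\mathcal T}$ built in $V[G_\delta]$, a stage $\alpha$ at which both sets (and hence $\mathcal T$ itself, which is recoverable from $A_{\mathcal T}$ via $\lambda^{-1}$) appear and $\dot{\Q}_\alpha$ is reasonable for them; you already note this implicitly, so it does not constitute a gap.
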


\section{Adding Pseudointersections by forcing $\BA (2^\omega)$}

It is interesting to ask whether Theorem \ref{killing towers} holds for forcing notions forcing $\BA (X)$ for other Polish spaces $X$. We do not have a general result of this form as we are not sure what the right notion of ``reasonable" should be here. However, a strong variation of Lemma \ref{mainlemma} holds for the natural forcing for $\BA (2^\omega)$. This is the forcing notion due to Medini from \cite[p. 139]{Medini15}. 

\begin{definition}
    Let $A$ and $B$ be $\aleph_1$-dense subsets of $\cantor$ and let $A = \bigcup_{\alpha\in \omega} A_\alpha$ be a partition of $A$ into countable dense subsets, similarly let $B = \bigcup_{\alpha \in \omega_1} B_\alpha$ be the same. The forcing notion $\mathbb M_{A, B}$ consists of pairs $p = (\pi_p, f_p)$ so that the following hold.
    \begin{enumerate}
        \item $f_p:A \to B$ is a finite partial injection.
        \item For all $\alpha < \omega_1$ and $a \in {\rm dom}(f_p)$ we have $a \in A_\alpha$ if and only if $f_p(a) \in B_\alpha$. 
        \item There is an $n = n_p \in \omega$ so that $\pi_p$ is a permutation of $2^n$. 
        \item $f_p$ {\em respects} $\pi_p$ i.e. for all $a \in {\rm dom}(f_p)$ we have $f_p(a) \hook n = \pi_p(a \hook n)$. 
    \end{enumerate}
    Given conditions $p, q \in \mathbb M_{A, B}$ define $p \leq q$ if and only if $f_p \supseteq f_q$ and for all $t \in 2^{n_p}$ we have $\pi_p(t) \hook n_q = \pi_q(t\hook n_q)$. 
\end{definition}

It is not hard to check this forcing notion adds a homeomorphism $h:\cantor \cong \cantor$ mapping $A$ to $B$. Moreover it is ccc, in fact $\sigma$-centered, see the proof of \cite[Theorem 2.1]{Medini15}. 

\begin{theorem}
    Let $\mathcal T$ be a tower of size $\aleph_1$, $X$ be the infinite co-infinite set at the top of the tower and let $A$ and $B$ be the $\aleph_1$-dense sets defined in Lemma \ref{little}. Then $\mathbb M_{A, B}$ forces that the generic homeomorphism $h:\cantor \to \cantor$ so that $h``A = B$, has the property that there are infinitely many $n\in X$ so that $h``[x(n)=1] = [x(n)=1]$. In particular, $\mathbb M_{A, B}$ adds a pseudo-intersection to the tower $\mathcal T$.
\end{theorem}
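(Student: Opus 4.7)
The plan is to exploit the fact that the generic homeomorphism $h$ is transparently described by the generic coherent sequence of permutations $\pi^*_n$ of $2^n$ via $h(Y)\hook n = \pi^*_n(Y\hook n)$. Under this correspondence, the statement $h``[x(n)=1] = [x(n)=1]$ is equivalent to the purely combinatorial assertion that $\pi^*_{n+1}$ maps $\{t \in 2^{n+1} : t(n) = 1\}$ setwise onto itself. The theorem thus reduces to the density claim that for every $p \in \mathbb M_{A,B}$ and every $k \in \omega$ there exist $q \leq p$ and $n \in X \setminus k$ with $n_q = n+1$ such that $\pi_q$ has the above set-preservation property at position $n$.

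To prove the density claim, I would first choose $n \in X$ large enough so that (i) $n \geq k$ and $n \geq n_p$; (ii) the finitely many strings $\{a\hook(n+1) : a \in {\rm dom}(f_p)\}$ are pairwise distinct, and likewise for $\{f_p(a)\hook(n+1)\}$; (iii) $f_p(a)(n) = 0$ for every $a \in {\rm dom}(f_p)$, possible since each $f_p(a) \in B$ is almost disjoint from $X$; and (iv) $a(n) = 0$ for every $a = \omega \setminus X_{\alpha_a}$ in ${\rm dom}(f_p)$, possible by choosing $n \in X_{\alpha^*}$ where $\alpha^* := \max\{\alpha_a : a \in {\rm dom}(f_p)\}$, since $X_{\alpha^*}$ is an infinite set almost contained in every $X_{\alpha_a}$ and in $X$. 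I would then extend $\pi_p$ to a permutation $\pi_q$ of $2^{n+1}$ by selecting, for each $s \in 2^{n_p}$, a bijection from the cone $\{t \in 2^{n+1} : t \supseteq s\}$ to the cone above $\pi_p(s)$ that sends strings with last bit $1$ to strings with last bit $1$; this is possible since each half of each cone contains $2^{n - n_p}$ strings. The only forced assignments are $\pi_q(a\hook(n+1)) = f_p(a)\hook(n+1)$ for $a \in {\rm dom}(f_p)$, and by (iii) and (iv) both sides have last bit $0$, so these assignments are compatible with a last-bit-preserving extension across each cone; by (ii) they remain injective. Setting $q = (\pi_q, f_p)$ yields a valid condition extending $p$, and since any $q' \leq q$ must cohere with $\pi_q$ at level $n+1$, $q$ forces $\pi^*_{n+1} = \pi_q$ and hence $h``[x(n)=1] = [x(n)=1]$.

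For the ``in particular'' clause, in the generic extension the restriction $h \hook A$ is a function from $A$ into $B$ (since $h``A = B$), and the infinitely many $n \in X$ witnessing $h``[x(n)=1] = [x(n)=1]$ \emph{a fortiori} satisfy $h``[x(n)=1] \subseteq \bigcup_{m \in X \setminus n}[x(m)=1]$ (take $m = n$). Lemma \ref{little} then directly delivers a pseudointersection for $\mathcal T$. The main obstacle is the compatibility check in the density argument, namely that the forced images $\pi_q(a\hook(n+1))$ happen to lie in the last-bit-$0$ half of their cones, allowing a last-bit-preserving extension; this is precisely what the interaction between the tower structure on $A$ and the almost-disjointness-from-$X$ of elements of $B$ delivers, once $n$ is chosen in $X_{\alpha^*}$ past the bounds dictated by (ii) and (iii).
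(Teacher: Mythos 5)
Your proof is correct and follows essentially the same density argument that the paper sketches: pick $n \in X$ beyond $k$ and $n_p$ such that every $a$ in the domain and $f_p(a)$ in the range of $f_p$ has its $n$-th bit equal to $0$ (using that the domain consists of complements of tower elements above $X$ and the range consists of sets almost disjoint from $X$), then extend $\pi_p$ to a last-bit-preserving permutation of $2^{n+1}$. The paper explicitly labels its proof a sketch and leaves details to the reader; you correctly supply the one nontrivial detail it glosses over, namely your condition (ii) that $n$ must also be large enough that the restrictions $a\hook(n+1)$ (and likewise $f_p(a)\hook(n+1)$) are pairwise distinct, which is what makes the forced assignments $\pi_q(a\hook(n+1)) = f_p(a)\hook(n+1)$ well-defined and injective so that they can be completed to a permutation.
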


\begin{proof}
    This is similar, though easier, than what we have seen before so we sketch the idea and leave the details to the reader. Given a natural number $k$ and a condition $p = (f_p, \pi_p) \in \mathbb M_{A, B}$ we want to find a $q \leq p$ and an $n \in X \setminus k$ so that $q$ forces the generic homeomorphism to map $[x(n) = 1]$ to itself. Indeed, since the domain of $f_p$ contains finitely many points in the tower and the range is finitely many elements of $\cantor$ which are almost disjoint from $X$ it is easy to find infinitely many $n \in X$ so that $n \notin \bigcup_{a \in {\rm dom}(p)} a \cup \bigcup_{b \in {\rm range}(p)} b$. If $n$ is such a number which is moreover larger than $k$ and $n_p$ (the natural number so that $\pi_p$ has domain $2^{n_p}$) then clearly we can find a permutation $\pi'$ of $2^{n+1}$ extending $\pi_p$ and compatible with $f_p$ so that $\pi`$ maps every element of $2^{n+1}$ with $1$ as its final bit to another element of $2^{n+1}$ with $1$ as its final bit. Clearly $q = (f_p, \pi')$ is the desired condition. 
\end{proof}

A similar argument can be made for the original forcing for forcing $\BA (\cantor)$ from \cite{BaldwinBeaudoin89}. Since this forcing is more involved we omit the argument. Note that the condition $h``[x(n)=1] = [x(n)=1]$ is much stronger than what we get in the case of linear orders. It is also not possible for linear orders, at least not in the setup we had in Section 2, as the only way this could happen for a fixed $n$ is if the generic isomorphism was the identity on every diadic rational appearing at the endpoints of a good at $n$ interval. However if this occurred for infinitely many times then the generic isomorphism would be the identity on a dense set and hence the identity which is obviously impossible. 

\section{Conclusion}

It would be nice to be able to apply the proof strategy from Theorem \ref{killing towers} to obtain that $\BA$ implies $\mfp > \aleph_1$. A natural attempt would be to replace the finiteness of the conditions with the fact that every $\aleph_1$-dense set under $\BA$ (indeed under $\mfb > \aleph_1$) can be covered by countably many closed (even compact) nowhere dense sets and attempt to do some kind of back and forth argument. Part of this is easy as the following lemma shows. 

\begin{lemma} \label{key}
    Assume $\BA$. Let $A$ and $B$ be $\aleph_1$-dense sets of reals. Let $F \subseteq \mathbb R \setminus A$ and $G \subseteq \mathbb R \setminus B$ be closed, nowhere dense sets and let $\varphi:F \to G$ be an order isomorphism between them. There is an isomorphism $\Phi:A \cup F \to B \cup G$ extending $\varphi$.
\end{lemma}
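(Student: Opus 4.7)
The plan is to exploit the gap structure of $F$ and $G$ and glue together applications of $\BA$ inside each gap. First I would decompose the complements: since $F$ is closed, $\mathbb R\setminus F$ is a countable disjoint union of its maximal open intervals, say $\mathbb R\setminus F = \bigsqcup_{i \in \omega} I_i$ with $I_i=(a_i,b_i)$ and $a_i,b_i\in F\cup\{-\infty,+\infty\}$, and similarly $\mathbb R\setminus G = \bigsqcup_j J_j$. Extending $\varphi$ by $\pm\infty \mapsto \pm\infty$, the order isomorphism induces an order-preserving bijection $\sigma$ between the two gap collections defined by $J_{\sigma(i)} := (\varphi(a_i),\varphi(b_i))$: this really is a maximal gap of $\mathbb R\setminus G$, since any $g\in G$ strictly between $\varphi(a_i)$ and $\varphi(b_i)$ would pull back via $\varphi^{-1}$ to a point of $F$ inside $(a_i,b_i)$, contradicting maximality.

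Next I would apply $\BA$ gap by gap. Fix $i$. Since $A$ is $\aleph_1$-dense in $\mathbb R$ and $I_i$ is a nonempty open interval (this is exactly where nowhere-density of $F$ matters, though in fact it is forced by $F \subseteq \mathbb R\setminus A$ together with $A$ being $\aleph_1$-dense), $A\cap I_i$ is $\aleph_1$-dense in $I_i$; symmetrically for $B\cap J_{\sigma(i)}$ inside $J_{\sigma(i)}$. Choose homeomorphisms $h_i: I_i\to\mathbb R$ and $h'_i: J_{\sigma(i)}\to\mathbb R$ (any two nonempty open intervals are homeomorphic to $\mathbb R$). Then $h_i(A\cap I_i)$ and $h'_i(B\cap J_{\sigma(i)})$ are $\aleph_1$-dense subsets of $\mathbb R$, so by $\BA$ they are order isomorphic. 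Pulling such an isomorphism back through $h_i$ and $h'_i$ yields an order isomorphism $\psi_i: A\cap I_i \to B\cap J_{\sigma(i)}$.

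Finally I would define $\Phi: A\cup F \to B\cup G$ by $\Phi\hook F := \varphi$ and $\Phi\hook (A\cap I_i) := \psi_i$ for each $i$. Well-definedness uses $A\cap F = \emptyset$ and that the $I_i$'s partition $\mathbb R\setminus F$; the map is a bijection onto $B\cup G$ because $B\cap G = \emptyset$, $\sigma$ is a bijection, and each $\psi_i$ maps onto $B\cap J_{\sigma(i)}$. Order preservation holds inside $F$ by $\varphi$, inside each piece $A\cap I_i$ by $\psi_i$, and across components because $\sigma$ respects the order of gaps and any element of a gap lies strictly between the gap's endpoints. The main obstacle is really just notational bookkeeping for the at-most-two unbounded semi-gaps arising when $F$ is bounded on one or both sides, handled uniformly by the convention $\varphi(\pm\infty)=\pm\infty$; substantively, once the gap decomposition is in place the lemma reduces to one application of $\BA$ per component.
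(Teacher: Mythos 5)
Your proof is correct and follows essentially the same route as the paper: decompose $\mathbb R\setminus F$ into maximal gaps, observe that $\varphi$ induces an order bijection between gaps of $F$ and gaps of $G$, apply $\BA$ inside each gap, and glue. You are in fact slightly more careful on two minor points the paper leaves implicit: you handle the up-to-two unbounded gaps via the convention $\varphi(\pm\infty)=\pm\infty$ (the paper's stated fact that every $I\in M(F)$ has the form $(a,b)$ with $a<b\in F$ is not literally true when $F$ is bounded), and you explicitly transport $A\cap I_i$ and $B\cap J_{\sigma(i)}$ to $\aleph_1$-dense subsets of $\mathbb R$ via homeomorphisms before invoking $\BA$, whereas the paper applies $\BA$ directly to $I\cap A$ and $\hat\varphi(I)\cap B$ which are $\aleph_1$-dense only as subsets of those intervals, not of $\mathbb R$.
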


\begin{proof}
    Fix $F$, $G$, $A$, $B$ and $\varphi$ as in the statement of the lemma. We call an open interval $(a, b) \subseteq \mathbb R \setminus F$ {\em maximal} if it is not contained in any properly larger open interval disjoint from $F$. Let us denote by $M(F)$ the set of maximal open intervals disjoint from $F$ and similarly for $M(G)$ etc. The following can be checked easily.

    \begin{fact}
        The following all hold for $F$ (and also, for $G$).
        \begin{enumerate}
            \item Every $a \in \mathbb R \setminus F$ is contained in a single $I \in M(F)$.
            \item Every $I \in M(F)$ has the form $(a, b)$ for some $a < b \in F$.
            \item If $I, J \in M(F)$ are distinct then they are disjoint.
        \end{enumerate}
    \end{fact}

    The third item immediately implies that if $I , J \in M(F)$ then we can order them $I < J$ if and only if every $x \in I$ is less than every $y \in J$ and this is a total order on $M(F)$ (or $M(G)$ respectively). We will always treat $M(F)$ as a linear order with this ordering moving forward. The point is the following.

    \begin{claim}
        $\varphi:F \to G$ induces an order isomorphism $\hat{\varphi}:M(F) \cong M(G)$.
    \end{claim}

    \begin{proof}[Proof of Claim]
        If $I \in M(F)$ then by item (2) above $I = (a, b)$ for some $a < b \in F$ so we let $\hat{\varphi}(I) = (\varphi(a), \varphi(b))$. First let us see that this is well defined. If $I \in M(F)$ is as above the $\varphi(a) < \varphi(b) \in J$ and, since $\varphi$ is an order isomorphism and $F \cap (a, b)= \emptyset$ we must have that $\hat{\varphi}(I) \in M(G)$. Injectivity is immediate so let's check surjectivity. If $J \in M(F)$ then $J = (c, d)$ for some $c < d \in G$ and by an argument symmetric to the one proving well definedness we get that $(\varphi^{-1}(c), \varphi^{-1}(d)) \in M(F)$ and $J$ is the image of this interval under $\hat{\varphi}$. Thus $\hat{\varphi}$ is a well defined bijection between $M(F)$ and $M(G)$. Finally, if now $I < J \in M(F)$ then we must have that $a < b \leq c < d$ where $I = (a, b)$ and $J = (c, d)$. Consequently $\hat{\varphi}(I) < \hat{\varphi}(J)$ since $\varphi$ is assumed to be order preserving and hence we have $\varphi(a) < \varphi(b) \leq \varphi(c) < \varphi(d)$ so $\hat{\varphi}$ is order preserving.
    \end{proof}

Now, for each $I$ in $M(F)$ by $\BA$ there is an order isomorphism $\psi_I: I \cap A \cong \hat{\varphi}(I) \cap B$. We fix one for each such $I$. Finally let $\Phi:A \cup F \to B \cup G$ be the map defined simply as $\bigcup_{I \in M(F)}\psi_I \cup \varphi$. Let us check that this is the required isomorphism. It is nearly immediate from what has come before that this is a bijection, extends $\varphi$ and maps $A$ onto $B$. Finally since it is a union of order isomorphisms mapping ordered intervals to one another in order it is again order preserving. 
\end{proof}

Therefore one can fix a predetermined isomorphism from some closed nowhere dense $F$ to another closed nowhere dense $G$ and extend it to an isomorphism from $A \setminus F$ to $B \setminus G$ for any $\aleph_1$-dense $A$ and $B$ (note that an $\aleph_1$-dense set minus a nowhere dense set is still $\aleph_1$-dense). The issue is that it is not clear in the above why, borrowing from the proof of Lemma \ref{mainlemma} we can find infinitely $n \in X$ not in any element of the closed nowhere dense set $F$. If this technical issue could be overcome it would be straightforward to apply what is done in this paper to answer Question \ref{pquestion} affirmatively. 

\bibliographystyle{plain}
\bibliography{BAforcing}

\end{document}